\newtheorem{theorem}{\textbf{Theorem}}[section]
\newtheorem{lemma}[theorem]{\textbf{Lemma}}
\newtheorem{proposition}[theorem]{\textbf{Proposition}}
\newtheorem{assumption}{\textbf{Assumption}}[section]
\newtheorem{remark}{\textbf{Remark}}[section]
\def\theorem@qed{\pushQED{\qed}\qedhere\popQED}
\def\BibTeX{{\rm B\kern-.05em{\sc i\kern-.025em b}\kern-.08em
    T\kern-.1667em\lower.7ex\hbox{E}\kern-.125emX}}
\begin{document}

\title{ \bf \algoname: A communication-efficient accelerated Newton-type fully distributed optimization algorithm*
}

\author{Souvik Das$^{1}$, Luca Schenato$^2$ and Subhrakanti Dey$^{1}$
\thanks{*This work is funded by Swedish Research Council grant 2023-04232.}
\thanks{$^{1}$Souvik Das and Subhrakanti Dey are with the Department of Electrical Engineering, Uppsala University, Sweden.  Email: {\tt\small \{souvik.das,subhrakanti.dey\}@angstrom.uu.se}}%
\thanks{$^{2}$Luca Schenato is with the Department of Information Engineering, University of Padova, Italy. Email: {\tt\small schenato@dei.unipd.it}}%
}


\maketitle
\begin{abstract} 
This article presents a second-order fully distributed optimization algorithm, \(\algoname\), driven by \emph{heavy-ball} momentum, for \(\lips\)-smooth and \(\strconv\)-strongly convex objective functions. A rigorous convergence analysis is performed, and we demonstrate \emph{global} linear convergence under certain sufficient conditions. Through extensive numerical experiments, we show that \(\algoname\) with heavy-ball momentum achieves acceleration, and the corresponding rate of convergence is strictly faster than its non-accelerated version, NETWORK-GIANT. Moreover, we compare \(\algoname\) with several state-of-the-art algorithms, both momentum-based and without momentum, and report significant performance improvement in convergence to the optimum. We believe that this work lays the groundwork for a broader class of second-order Newton-type algorithms \emph{with momentum} and motivates further investigation into open problems, including an analytical proof of local acceleration in the fully distributed setting for convex optimization problems.
\end{abstract}

\section{Introduction}
\label{sec:intro}

Decentralized/Distributed optimization has emerged as an attractive paradigm to  effectively solve large-scale optimization problems with reasonable accuracy, in various domains such as signal processing and sparse linear regression \cite{ref:XH-XH-XJ-23}, machine learning \cite{ref:HTW-ZY-ZW-MH-18,ref:AN-20} and risk minimization \cite{ref:KK-HL-20}, control systems \cite{ref:IN-VN-13,ref:AN-JL-18}, large scale sensor networks \cite{ref:MR-RN-04}, and power grids \cite{ref:QL-XL-HL-TH-20}.

Early works in distributed optimization for convex objective functions date back to \(1950\) \cite{ref:MRH-ES}. It has garnered renewed interest due to its low per-iteration cost and ease of deployment in a distributed setting. While distributed or decentralized optimization can also refer to a setting where nodes can communicate to a central server (e.g., Federated Learning (FL)), in this paper, we will only focus on a fully distributed setting without a central server, where nodes can only communicate to their immediate neighbours. In this context, distributed optimization algorithms are primarily based on distributed gradient descent and distributed dual averaging methods, applied to undirected and directed graphs. The authors in \cite{ref:JCD-AA-MJW-11,ref:AN-AO-14} reported sublinear convergence for convex objectives due to the use of a diminishing step-size. Starting with the initial work \cite{EXTRA_algo} which introduced the idea of gradient tracking,  the works \cite{ref:GQ-NL-17,ref:AN-AO-WS-17,ref:RX-UAK} have established linear convergence for strongly convex functions with a constant step-size, by exactly estimating the global gradient and attaining convergence to the exact optima. It is well known that accelerated convergence can be achieved by momentum-based or accelerated gradient descent methods \cite{ref:BTP-64,ref:YN-83}, in the centralized optimization paradigm. In the distributed setting with gradient descent-based methods, 
 both heavy-ball and Nesterov's acceleration methods have been implemented, and global linear convergence has been reported under sufficient conditions \cite{ref:RX-UAK-19,ref:GQ-NL-19, ref:JG-XL-YHD-YH-PY-22}. Additional aspects like directed graphs \cite{ref:RX-DJ-UAK-19,ref:JG-XL-YHD-YH-PY-22 }, time-varying graphs \cite{ref:DTYN-DTN-AN-23}, stochastic gradient tracking \cite{ref:SP-AN-21}, and parametric varying momentum-based algorithms \cite{ref:SH-JG-QLD-CZ-25} have been also been investigated in this context. However, only a few articles discuss provable accelerated convergence, e.g.\cite{ref:RX-UAK-SK-20,ref:BL-LC-JY-22}. For the non-strongly convex case, the authors in \cite{ref:YL-WL-BZ-JD-24} reported sublinear convergence with a vanishing step-size. 
 
 Although Newton’s method is known for its fast, locally quadratic convergence, its adoption in distributed optimization has been limited due to the high cost of Hessian inversion and the communication overhead of sharing local Hessians among nodes. In recent years, various distributed approximate Newton-type methods have emerged, which include, e.g, networked Newton-Raphson techniques (requiring full Hessian communication) \cite{varagnolo2015newton}, Network-Newton type methods using a truncated Taylor series expansion of the inverse Hessian \cite{mokhtari2016network}, and Network-DANE
 \cite{li2020communication}, based on an approximate Newton-type FL algorithm called DANE 
 \cite{shamir2014communication}. More recently, a new approximate fully distributed Newton-type algorithm, called Network-GIANT \cite{ref:AM-GS-LS-SD-23}  (based on the FL algorithm GIANT \cite{ref:SW-FR-PX-MWM-18}), which was shown to have the same communication complexity as distributed gradient-tracking based methods, with global linear convergence under appropriate conditions on the step-size, and faster convergence than many of the existing distributed second-order methods mentioned earlier. The incorporation of acceleration into approximate Newton-type algorithms remains relatively unexplored. Notable exceptions include \cite{ref:HY-LL-ZZ-20}, where the authors developed a centralized Newton-type algorithm incorporating Nesterov’s acceleration and reported improved local convergence, and \cite{ref:TZY-LP-20}, where DANE is combined with heavy-ball momentum in the federated learning regime. However, in the fully distributed setting, the design and analysis of accelerated, communication-efficient Newton-type methods remain unexplored. We address this gap by introducing an accelerated version of Network-GIANT, termed \(\algoname\),
 which adopts a heavy-ball momentum scheme.

\subsection*{Our contributions}
\begin{itemize}[leftmargin=*, label = \(\circ\)]
    \item We present a fully distributed approximate Newton-type optimization algorithm with \emph{heavy-ball} acceleration for a group of agents connected over an undirected network. The algorithm has a communication complexity of \(\Bigoh{(\agents \dimdv)}\), where \(\agents\) and \(\dimdv\) denote the number of agents and the dimension of the decision space, respectively, identical to that of gradient-based algorithms.
    \item We establish a \emph{global linear convergence rate} of the order \(\Bigoh{\Big(\spec(\conmat(\stsz, \hbstsz))^t\Big)}\) for objective functions that are \(\lips\)-smooth and \(\strconv\)-strongly convex, under certain verifiable and mild sufficient conditions. Here \(t\) is the iteration index and \(\spec(\conmat(\stsz, \hbstsz))\) refers to the spectral radius of the matrix describing the dynamics of the errors; see Proposition \ref{prop:lin ineq} and Theorem \ref{th:key} for further information.

    \item We evaluate the performance of \(\algoname\) against several state-of-the-art accelerated gradient based distributed algorithms  through extensive numerical experiments, including both heavy-ball momentum and Nesterov's accelerated methods. Our results empirically demonstrate that \(\algoname\) indeed achieves an accelerated convergence rate due to the addition of the heavy-ball momentum, in comparison to its unaccelerated counterpart, Network-GIANT, and 
    first-order distributed accelerated methods \cite{ref:RX-UAK-19,ref:GQ-NL-19}.
\end{itemize}

\subsection*{Organization}The article is organized as follows: \S \ref{sec:ps} presents the problem formulation and the underlying assumptions, \S \ref{sec:algo} presents the key steps of \(\algoname\) and its associated notations, the main features of \(\algoname\), and its convergence analysis. The main theoretical results --- Proposition \ref{prop:lin ineq} and Theorem \ref{th:key} --- of this article are given in \S \ref{subsec:convergence}. Numerical experiments are included in \S \ref{sec:experiments},  followed by the concluding remarks in \S \ref{sec:conclusion}.

\subsection*{Notations}
We employ the following notations. For \(D \in \Rbb^{\agents \times \agents}\), \(\spec(D)\) is its spectral radius.\footnote{Here the norm is induced \(2\)-norm of a matrix, which is defined as\(\norm{D}_2 = \sup_{x \neq 0, x \in \Rbb^n} \frac{\norm{Ax}_2}{\norm{x}_2}\).} All vector norms are assumed to be Euclidean, and matrix norms \(\norm{\cdot}:\Rbb^{m \times n}\lra \Rbb\) to be the Frobenius norms, unless specified otherwise. For any square matrix, the symbol \(\norm{\cdot}_2\) denotes the spectral norm (or the induced \(2\)-norm). Moreover, for a square matrix \(D\) and any \(X \in \Rbb^{n \times p}\), the identity \(\norm{DX}_F \le \norm{D}_2 \norm{X}_F\) is used in the analysis of the algorithm, and for simplicity, we drop the subscript $F$ from the matrix Frobenius norms henceforth.
\section{Problem formulation}
\label{sec:ps}
\subsection{Problem description}\label{subsec:ps}
We study the distributed optimization problem 
\begin{equation}\label{eq:key prob}
    \min_{\dvar \in \Rbb^{\dimdv}} \objfunc(\dvar) \Let \frac{1}{\agents}\sum_{i=1}^{\agents} \objfunc_i(\dvar),
\end{equation}
consisting of  \(\agents\)-agents connected over a network \(\network\). Here \(\dvar \in \Rbb^{\dimdv}\) is the decision variable, for each \(i\)-th agent, where \(i \in \aset[]{1, 2, \ldots, \agents}\) the function \(\objfunc_i: \Rbb^{\dimdv} \mapsto \Rbb\) denotes the local objective function, which depends on the local data \(\ldata_i\). The goal of the distributed optimization problem is for all nodes to collaboratively reach the global minimizer using only local computations and communications with neighboring nodes.


The underlying communication network is modeled as an \emph{undirected} graph \(\network = \aset[\big]{\nodeset, \edgeset}\), where \(\nodeset = \aset[]{1,2,\cdots,\agents}\) denotes the set of nodes (or vertices) representing the agents, and \(\edgeset \subset \nodeset \times \nodeset\) denotes the set of edges. Each edge \((i,j) \in \edgeset\) represents an undirected link between agents \(i\) and \(j\), allowing them to communicate and exchange data directly. For each agent \(i \in \nodeset\), we define the set of \emph{neighbors} by \(\ngbr_i \Let \aset[]{j \in \nodeset \suchthat (i,j) \in \edgeset}\). 

\subsection{Standing assumptions}\label{subsec:sa}
\begin{assumption}[Consensus weight matrix]
    \label{assum:on graph}
    A nonnegative doubly stochastic consensus weight matrix \(\Wght = [\wght_{ij}]_{i,j=1}^{\agents}\), satisfying \(\sum_{i'} \wght_{i'j} = \sum_{j'} \wght_{ij'}=1\) for every \(i',j' \in \nodeset\), encodes the network structure. Agent \(i\) can receive data from agent \(j\) if and only if \(\wght_{ij}>0\), and \(\wght_{ii}>0\) for \(i \in \nodeset\).
\end{assumption}

\begin{assumption}[Regularity of local objective functions]
    \label{assum:Standard assumptions}
    We impose the standing assumptions: for each \(i \in \aset[]{1,2,\ldots,\agents}\), \(\objfunc_i: \Rbb^{\dimdv} \mapsto \Rbb\) is twice differentiable, \(\strconv\)-strongly convex, and \(\lips\)-smooth, i.e., there exist positive constants \(\strconv,\ \lips\) such that for each \(i \in \aset[]{1,2,\ldots, \agents}\) and for all \(\dvar,y \in  \Rbb^{\dimdv}\), we get
    \begin{equation}\label{eq:con_smoothness}
        \begin{cases}
            \hess \objfunc_i (\dvar) \succeq \strconv \identity{\dimdv}\\
            \norm{\grd \objfunc_i(\dvar) - \grd \objfunc_i(y)} \leq \lips \norm{\dvar - y}.
        \end{cases}
    \end{equation}
\end{assumption}
The above two conditions in \eqref{eq:con_smoothness} imply that 
\begin{equation}
    \label{eq:strongly_convex_Lips}
    \strconv \identity{\dimdv} \preceq \hess \objfunc_i(\dvar) \preceq \lips \identity{\dimdv} \text{ for every }\dvar \in \Rbb^{\dimdv},
\end{equation}
for each \(i \in \aset[]{1,2,\ldots, \agents}\). It also follows immediately that the global objective function $f(x)$ is strongly convex and Lipschitz continuous with the same parameters and $\mu \identity{\dimdv} \preceq \nabla^2 f(x) \preceq \lips \identity{\dimdv}$. The ratio 
\(\cn \Let \frac{\lips}{\strconv}>1,\)
refers to the condition number associated with the Hessian of the local objective functions \((\objfunc_i)_{i=1}^{\agents}\). Note that the optimization problem \eqref{eq:key prob} admits an optimal solution \(\dvar^{\ast}\), which follows as an immediate consequence of strong convexity \cite[p. no. \(460\)]{ref:SB-LV-04}.

\section{\(\algoname\) and its analysis}
\label{sec:algo}

\subsection{Key steps of \(\algoname\)}\label{subsec:algo}
In \(\algoname\), the key idea of Network-GIANT \cite{ref:AM-GS-LS-SD-23} is extended by including a \emph{heavy-ball momentum term}, driven by a gradient-tracking mechanism. The key consensus steps of \(\algoname\) is described by the following recursions at each \(i\)-th agent, 
\begin{equation}\label{eq:agent con step}
\begin{aligned}
    &\dvar_i(t+1) = \sum_{j \in \ngbr_i \cup \aset[]{i}} \wght_{ij}\dvar_j(t) - \stsz \cdvar_i(t) + \hbstsz \hbvar_i(t), \\
    &\gt_i(t+1) = \grd \objfunc_i\big(\dvar_i(t+1)\big) - \grd \objfunc_i\big(\dvar_i(t)\big), \\
    &\gtvar_i(t+1) = \sum_{j \in \ngbr_i \cup \aset[]{i}} \wght_{ij} \gtvar_j(t) + \gt_i(t+1),\\
    & \hbvar_i(t+1) = \dvar_i(t+1) - \dvar_i(t), \text{ and }\cdvar_i(t) = \big[\hess \objfunc_i(\dvar_i(t))\big]{\inverse} \gtvar_i(t),
\end{aligned}
\end{equation}
for each iteration \(t \in \Nz\), where \(\stsz, \hbstsz, > 0 \) are scalars to be picked appropriately, and \(\gtvar_i(0) = \grd \objfunc_i\big(\dvar_i(0)\big)\) must be satisfied. Here the variable \(\gtvar(t)\) for each \(i\), refers to as  a gradient tracker \cite{ref:GQ-NL-17}. For each \(i \in \aset[]{1,2,\ldots, \agents}\), the variable \(\cdvar_i(t) \Let \big[\hess \objfunc_i(\dvar_i(t))\big]{\inverse} \gtvar_i(t) \in \Rbb^{\dimdv}\) represents the local descent direction, and \(\gt_i(t)\) with \(\gt_i(0) =\grd\objfunc_i\big(\dvar_i(0)\big) \in \Rbb^{\dimdv}\) keeps track of the history of the local gradient at each \(t\), and \(\hbvar(t)\) with \(\hbvar(0) = 0\) is the momentum term.   Here \(\dvar_i(t) \in \Rbb^{\dimdv}\) and \(\gtvar_i(t) \in \Rbb^{\dimdv}\) denote the local estimates of global variables \(\dvar(t) \in \Rbb^{\dimdv}\) and \(\gtvar(t) \in \Rbb^{\dimdv}\) of \(\dvar(t) \in \Rbb^{\dimdv}\) and \(\gtvar(t) \in \Rbb^{\dimdv}\)  at the \(t\)-th iteration, respectively. 

We adopt a matrix notation to compactly represent \eqref{eq:agent con step}. To that end, we define the global decision variables  \(\dmat(t) \Let \bigl(\dvar_1(t)\; \cdots\; \dvar_{\agents}(t) \bigr)^{\top} \), \(\gtmat(t) \Let \bigl(\gtvar_1(t)\; \cdots \; \gtvar_{\agents}(t) \bigr)^{\top} \), \(\cdmat(t) \equiv \cdmat\big(\dmat(t) \big) \Let \big( \cdvar_1(t) \cdots \cdvar_{\agents}(t)\big)^{\top} \), \(\G(t) \Let \big(\gt_1(t) \cdots \gt_{\agents}(t) \big)^{\top}\), and \(\hbmat(t) \Let \big(\hbvar_1(t) \cdots \hbvar_{\agents}(t) \big)^{\top}\) all of them belong to \( \in \Rbb^{\agents \times \dimdv}\).
Let \(\Objfunc: \Rbb^{\agents \times \dimdv} \lra \Rbb\) be the aggregate objective function, defined by \(\Objfunc(\dmat(t)) \Let \frac{1}{n}\sum_{i=1}^{\agents}\objfunc_i(\dvar_i(t))\), and let \(\grd \Objfunc(\dmat(t)) \Let \bigl(\grd \objfunc_1(\dvar_1(t)) \; \cdots \; \grd \objfunc_{\agents}(\dvar_{\agents}(t)) \bigr)^{\top} \in \Rbb^{\agents \times \dimdv}\). Then \(\grd \ol{\Objfunc}(\dmat(t)) \Let \frac{1}{\agents} \ones^{\top} \grd \Objfunc(\dmat(t))\). We compactly write the consensus steps \eqref{eq:agent con step}: 
\begin{equation}\label{eq:update rule}
    \begin{aligned}
        &\dmat(t+1) = \Wght\dmat(t) -   \stsz \cdmat(t) + \hbstsz \hbmat(t),\\
        & \G(t+1) = \grd \Objfunc \big(\dmat(t+1) \big) - \grd \Objfunc \big(\dmat(t) \big), \\
        &\gtmat(t+1) = \Wght\gtmat(t)  + \G(t+1),\\
        &\hbmat(t+1) = \dmat(t+1) - \dmat(t), \text{ for each }t \in \Nz,
    \end{aligned}
\end{equation}
with \(\gtmat(0) = \grd \Objfunc\big(\dmat(0)\big)\), \(\G(0) =\gtmat(0)\), and \(\hbmat(0) = \dmat(0) - \dmat(-1)\) with \(\dmat(0) = \dmat(-1)\) . The quantity \(\stsz>0\) is the step-size of \(\algoname\), and \(\hbstsz>0\) is the momentum coefficient that controls the momentum term \(\hbstsz \hbmat(t)\).

\subsection{Features of \(\algoname\)}
\label{subsec:features }
\(\algoname\) is built on three key principles:
\begin{itemize}[leftmargin=*, label = \(\circ\)]
    \item \embf{Gradient tracking:} The second consensus step in \eqref{eq:update rule}, asymptotically tracks the past gradients with \emph{zero error} and is essential for convergence to the exact optimal solution. Further details are provided in Lemma \ref{it:port_1}.

    \item \embf{Second-order oracle:} 
    Approximate Newton-type methods are known to achieve faster convergence than first-order gradient-based algorithms. We adopt the key idea of Network-GIANT, wherein, for each iteration \(t\), the sequence \((\dvar_i(t), \gtvar_i(t))_{i=1}^{\agents}\), are exchanged among the agents instead of the local Hessians, reducing the per-iteration communication complexity to \(\Bigoh{\big( \agents \dimdv\big)}\) only.

    \item \embf{Heavy-ball acceleration:} The third component of \(\algoname\) is the heavy-ball momentum term, denoted by \(\hbstsz \hbmat(\cdot)\), which is well-known to accelerate the rate of convergence in centralized gradient descent algorithms \cite{ref:BTP-64}. It was proved that for specific choice of the step-sizes and the momentum coefficient, the method guarantees faster convergence locally for quadratic functions with a local rate given by \(\Bigoh{\Big(\frac{\sqrt{\cn} - 1}{\sqrt{\cn} + 1} \Big)^t}\), where \(\cn\) is the condition number. The convergence analysis done in this article establishes \emph{global} linear convergence under certain sufficient conditions. However, we demonstrate accelerated convergence only through numerical experiments.
        
   
\end{itemize}
\begin{algorithm}[!ht]
    \SetAlgoLined
    \DontPrintSemicolon
    \SetKwInOut{ini}{Initialize}
    \SetKwInOut{giv}{Data}
    \SetKwInOut{out}{Output}
    
    \giv{Tolerance \(\epsilon>0\), step-size \(\stsz\), momentum coefficient \(\hbstsz\)}
    
    \ini{\(\dmat(0)\), impose \(\gtmat(0) = \grd \Objfunc(\dmat(0))\), \(\dmat(0) = \dmat(-1)\), \(\hbmat(0) = \dmat(0) - \dmat(-1)\) 
     }
    
    \While{\(\norm{\grd \Objfunc(\dmat(t))} \le \epsilon \)}{ Update rule: The variables \((\dmat(t), \G(t), \gtmat(t), \hbmat(t))\) follow the recursions in \ref{eq:update rule}          

        \vspace{2mm}
        Update iteration index \(t \gets t+1\)
    }
    
    \out{\(\bigl( \dmat(t), \gtmat(t) \bigr)_{t=0}^{\horizon}\)}

\caption{\algoname:  An accelerated Newton-type
fully distributed optimization algorithm}
\label{alg:sec_ord_comp}
\end{algorithm}

\subsection{Convergence of \(\algoname\)}
\label{subsec:convergence}
We establish linear convergence of \(\algoname\) in this section. The proof methodology involves establishing a linear system of inequalities and identifying a set of sufficient conditions on the step-size \(\stsz\) and the momentum coefficient \(\hbstsz\). Define the errors: 
\begin{itemize}[leftmargin=*, label = \(\circ\)]
    \item consensus error \(\conerr(t) \Let \norm{\dmat(t) - \ones \ol{\dmat}(t)}\),
    \item gradient tracking error \(\gterr(t) \Let \norm{\gtmat(t) - \ones\avggtmat(t)}\),
    \item optimization error \(\opterr(t) \Let \sqrt{\agents}\norm{\avgdmat(t) - \dvar^{\ast}},\)
    \item the momentum term \(\hberr(t) \Let \norm{\dmat(t) - \dmat(t-1)}\).
\end{itemize}
We next present several properties of the variables of interest: the gradient-tracking variable \(\gtmat\) and the decision variable \(\dmat\).
\begin{lemma}
    \label{lem:portmanteuau 1}
    Consider Algorithm \ref{alg:sec_ord_comp} along with its associated notations. Suppose that Assumptions \ref{assum:on graph} and \ref{assum:Standard assumptions} hold. Then the following assertions hold:
    \begin{enumerate}[leftmargin=*,label = (\ref{lem:portmanteuau 1}-\alph*)]
        \item \label{it:port_1}If the initial condition \(\gtmat(0)\) is chosen such that \(\gtmat(0) = \grd \Objfunc(\dmat(0))\), then for every \(t\ge 0\) we have \(\avggtmat(t) = \grd \ol{\Objfunc}(\dmat(t)) \Let \frac{1}{\agents} \ones^{\top} \grd \Objfunc(\dmat(t))\).

        \item \label{it:port_2} For every \(t \ge 0\), we get
        \(\avgdmat(t+1) = \avgdmat(t) - \stsz \frac{1}{\agents}\ones^{\top} \cdmat(t) + \hbstsz \frac{1}{\agents}\ones^{\top}
        \hbmat(t).\)

        \item \label{it:port_3} Let \(L\) be the Lipschitz constant in Assumption \ref{assum:Standard assumptions}. For every \(t \ge 0\), \(\norm{\avggtmat(t)^{\top} - \grd \objfunc\bigl( \avgdmat(t)^{\top}\bigr) } \le \frac{\lips}{\sqrt{\agents}}     \norm{\dmat(t) - \ones \avgdmat(t)}\) and \(\norm{\grd \Objfunc\big(\dmat(t+1) \big) - \grd \Objfunc\big(\dmat(t) \big)} \le \lips \norm{\dmat(t+1) - \dmat(t)}\).

        \item \label{it:port_5} The inequality \(\norm{\grd \av{\Objfunc} \big(\ones \avgdmat(t) \big) - \grd \objfunc\big(\dvar^{\ast}\big)} \le \lips \norm{\avgdmat(t)^{\top} - \dvar^{\ast}}\) is valid for each \(t\).

        \item \label{it:port_4} Recall that \(\strconv>0\) is the constant associated with strong convexity of the sequence of functions \((\objfunc_i)_{i=1}^{\agents}\) in Assumption \ref{assum:Standard assumptions}. For each \(t \ge 0\), we have \(\norm{\cdmat(t)  - \ones \avgcdmat(t)} \le \frac{1}{\strconv}\norm{\gtmat(t) - \ones \grd \ol{\Objfunc}(\dmat(t))} + \frac{\lips}{\strconv} \norm{\dmat(t) - \ones \avgdmat(t)} + \sqrt{\agents} \frac{\lips}{\strconv} \norm{\avgdmat(t)^{\top} - \dvar^{\ast}}\).
        
        \item \label{it:port_6} The momentum term \(\hbmat(t)\) at each iteration \(t\) satisfies \(\norm{\hbmat(t) - \frac{1}{\agents} \ones \ones^{\top} \hbmat(t)} \le \norm{\hbmat(t)}\).
    \end{enumerate}
\end{lemma}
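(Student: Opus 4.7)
The strategy is to prove each item in sequence, using earlier items as needed, with the central algebraic trick being left-multiplication of the matrix recursions in \eqref{eq:update rule} by $\frac{1}{\agents}\ones^{\top}$ together with the doubly-stochastic identity $\ones^{\top}\Wght = \ones^{\top}$ from Assumption \ref{assum:on graph}.

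For \ref{it:port_1} I would induct on $t$. The base case is immediate from the initialization $\gtmat(0) = \grd \Objfunc(\dmat(0))$. For the inductive step, multiplying $\gtmat(t+1) = \Wght\gtmat(t) + \G(t+1)$ on the left by $\frac{1}{\agents}\ones^{\top}$ collapses the first term to $\avggtmat(t)$ by double stochasticity; by the inductive hypothesis this equals $\grd\ol{\Objfunc}(\dmat(t))$, which telescopes against $\frac{1}{\agents}\ones^{\top}\G(t+1) = \grd\ol{\Objfunc}(\dmat(t+1)) - \grd\ol{\Objfunc}(\dmat(t))$ to leave exactly $\grd\ol{\Objfunc}(\dmat(t+1))$. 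Part \ref{it:port_2} is obtained by the same averaging applied to the $\dmat$-update.

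For \ref{it:port_3}, use \ref{it:port_1} to write $\avggtmat(t)^{\top} = \frac{1}{\agents}\sum_{i} \grd\objfunc_i(\dvar_i(t))$, insert $\grd\objfunc(\avgdmat(t)^{\top}) = \frac{1}{\agents}\sum_i \grd\objfunc_i(\avgdmat(t)^{\top})$ inside the sum, apply the per-agent Lipschitz bound from \eqref{eq:con_smoothness}, and close with Cauchy--Schwarz: $\frac{1}{\agents}\sum_i \norm{\dvar_i(t) - \avgdmat(t)^{\top}} \le \frac{1}{\sqrt{\agents}}\norm{\dmat(t) - \ones\avgdmat(t)}$. The second inequality in \ref{it:port_3} is the Frobenius version of the per-agent Lipschitz bound, obtained by squaring and summing row-wise. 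Part \ref{it:port_5} follows directly because $\grd\objfunc$ inherits the $\lips$-Lipschitz constant from the $\objfunc_i$'s via averaging and $\grd\objfunc(\dvar^{\ast}) = 0$.

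The main obstacle is \ref{it:port_4}. Since $\frac{1}{\agents}\ones\ones^{\top}$ is an orthogonal projector in the Frobenius geometry, one has $\norm{\cdmat(t) - \ones\avgcdmat(t)} \le \norm{\cdmat(t)}$. Each row satisfies $\norm{\cdvar_i(t)} = \norm{[\hess\objfunc_i(\dvar_i(t))]^{-1}\gtvar_i(t)} \le \frac{1}{\strconv}\norm{\gtvar_i(t)}$ by \eqref{eq:strongly_convex_Lips}, and summing row-wise yields $\norm{\cdmat(t)} \le \frac{1}{\strconv}\norm{\gtmat(t)}$. I would then split $\gtmat(t) = [\gtmat(t) - \ones\grd\ol{\Objfunc}(\dmat(t))] + \ones\grd\ol{\Objfunc}(\dmat(t))$ using \ref{it:port_1}; the first summand is already the gradient-tracking error, and the Frobenius norm of the second equals $\sqrt{\agents}\,\norm{\grd\ol{\Objfunc}(\dmat(t))^{\top}}$. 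Further decompose $\grd\ol{\Objfunc}(\dmat(t))^{\top} = [\grd\objfunc(\avgdmat(t)^{\top}) - \grd\objfunc(\dvar^{\ast})] + [\avggtmat(t)^{\top} - \grd\objfunc(\avgdmat(t)^{\top})]$ and apply \ref{it:port_5} and \ref{it:port_3} respectively; combining the three bounds with the prefactor $1/\strconv$ recovers the claimed three-term inequality. Finally, \ref{it:port_6} is a one-liner: $\hbmat(t) - \frac{1}{\agents}\ones\ones^{\top}\hbmat(t)$ is the image of $\hbmat(t)$ under the complementary orthogonal projector $I - \frac{1}{\agents}\ones\ones^{\top}$, so its Frobenius norm is bounded by $\norm{\hbmat(t)}$.
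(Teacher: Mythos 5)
Your proposal is correct and follows essentially the same route as the paper: averaging the recursions via $\frac{1}{\agents}\ones^{\top}$ for \ref{it:port_1}--\ref{it:port_2}, per-agent Lipschitz bounds closed by Cauchy--Schwarz for \ref{it:port_3} and \ref{it:port_5}, the projection/norm-contraction argument plus the three-term decomposition of $\gtmat(t)$ (using $\grd\objfunc(\dvar^{\ast})=0$) for \ref{it:port_4}, and the complementary-projector bound for \ref{it:port_6}. The only cosmetic difference is that you prove \ref{it:port_1} by explicit induction where the paper cites an external lemma, and you phrase the $\norm{\cdmat(t)-\ones\avgcdmat(t)}\le\norm{\cdmat(t)}$ step via orthogonal projection rather than expanding the square, which is the same computation.
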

A proof of Lemma \ref{lem:portmanteuau 1} is deferred to Appendix \ref{sec:proof of lemma}.
The next result establishes a set of linear inequalities satisfied by the augmented error \(\errvec \Let (\conerr, \gterr, \opterr,\hberr)\):
\begin{proposition}
    \label{prop:lin ineq}
    Consider Algorithm \ref{alg:sec_ord_comp} with its notations, and suppose that Assumptions \ref{assum:on graph} and \ref{assum:Standard assumptions} hold. Define  the quantity
    \(\quanto \Let \norm{\Wght - \identity{\dimdv}}_2 \text{ and }\specnorm \Let \norm{\Wght   - \frac{1}{\agents}\ones \ones^{\top}}_2 < 1,\)
    with \(\Wght \in \Rbb^{\agents \times \agents}\) be the consensus weight matrix. Recall that \(\stsz>0\) and \(\hbstsz>0\) denote the step-size and the momentum coefficient in \eqref{eq:update rule}, and \(\cn\) refers to the condition number of the local Hessian of \(\objfunc_i\) for each \(i\). If \(\stsz \le \frac{\strconv}{\lips}\), then for each \(t \in \Nz\), \(\errvec(t+1) \le \conmat(\stsz,\hbstsz)\errvec(t)\), where 
    \begin{align}
        \label{eq:lin system ineq}
        \conmat(\stsz,\hbstsz) \Let \begin{pmatrix}
            \specnorm + \stsz \cn & \frac{\stsz}{\strconv} & \stsz \cn & \hbstsz\\
            \lips (\quanto + \stsz \cn) & \specnorm + \stsz \cn & \lips \stsz \cn & \lips 
            \hbstsz\\
            \stsz \cn & \frac{\stsz}{\strconv} & 1 - \frac{\stsz}{\cn} & \hbstsz\\
            \quanto + \stsz \cn & \frac{\stsz}{\strconv} & \stsz \cn & \hbstsz
        \end{pmatrix},
    \end{align}
    \(\strconv>0\) and \(\lips>0\) are the strong-convexity parameter and Lipschitz constant of the local gradients. 
\end{proposition}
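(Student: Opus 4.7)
The plan is to bound each coordinate of \(\errvec(t+1)\) separately and stack the four scalar inequalities to recover the matrix \(\conmat(\stsz,\hbstsz)\). The ingredients are the compact update rule \eqref{eq:update rule}, the doubly stochastic property of \(\Wght\) (which makes \(\Wght\ones=\ones\) and \(\Wght-\tfrac{1}{\agents}\ones\ones^{\top}\) contractive on the consensus complement with norm \(\specnorm\)), and the technical bounds from Lemma \ref{lem:portmanteuau 1}. I would handle the four rows in the following order.

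\textbf{Row 1 (consensus error).} Using Lemma \ref{lem:portmanteuau 1}-\ref{it:port_2} to write \(\avgdmat(t+1)\) explicitly, I would subtract \(\ones\avgdmat(t+1)\) from the \(\dmat\)-update to get
\[
\dmat(t+1)-\ones\avgdmat(t+1)=\bigl(\Wght-\tfrac{1}{\agents}\ones\ones^{\top}\bigr)\bigl(\dmat(t)-\ones\avgdmat(t)\bigr)-\stsz\bigl(\cdmat(t)-\ones\avgcdmat(t)\bigr)+\hbstsz\bigl(\hbmat(t)-\tfrac{1}{\agents}\ones\ones^{\top}\hbmat(t)\bigr).
\]
Taking the Frobenius norm and substituting the contraction factor \(\specnorm\) for the first term, Lemma \ref{lem:portmanteuau 1}-\ref{it:port_4} for the \(\cdmat\) difference (after identifying \(\sqrt{\agents}\|\avgdmat(t)^{\top}-\dvar^{\ast}\|=\opterr(t)\) and using \(\cn=\lips/\strconv\)), and Lemma \ref{lem:portmanteuau 1}-\ref{it:port_6} for the momentum term, yields exactly the first row of \(\conmat(\stsz,\hbstsz)\).

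\textbf{Row 4 (momentum term), done next because Row 2 depends on it.} From the update rule, \(\hbmat(t+1)=(\Wght-\identity{})\dmat(t)-\stsz\cdmat(t)+\hbstsz\hbmat(t)\). Because \((\Wght-\identity{})\ones=0\), the first term equals \((\Wght-\identity{})(\dmat(t)-\ones\avgdmat(t))\) and so is bounded by \(\quanto\,\conerr(t)\). For \(\|\cdmat(t)\|\) I would use the strong-convexity bound \(\|\cdvar_i(t)\|\le\strconv^{-1}\|\gtvar_i(t)\|\) node-wise, then split \(\|\gtmat(t)\|\le\gterr(t)+\sqrt{\agents}\,\|\avggtmat(t)\|\); applying Lemma \ref{lem:portmanteuau 1}-\ref{it:port_1},\ref{it:port_3},\ref{it:port_5} together with \(\grd f(\dvar^{\ast})=0\) gives \(\sqrt{\agents}\,\|\avggtmat(t)\|\le\lips\conerr(t)+\lips\opterr(t)\), hence \(\|\cdmat(t)\|\le\strconv^{-1}\gterr(t)+\cn\conerr(t)+\cn\opterr(t)\). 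This reproduces row four.

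\textbf{Row 2 (gradient-tracking error).} Combining the \(\gtmat\)-update with Lemma \ref{lem:portmanteuau 1}-\ref{it:port_1} one obtains
\[
\gtmat(t+1)-\ones\avggtmat(t+1)=\Wght\bigl(\gtmat(t)-\ones\avggtmat(t)\bigr)+\bigl(\identity{}-\tfrac{1}{\agents}\ones\ones^{\top}\bigr)\G(t+1).
\]
The first term contracts by \(\specnorm\); for the second I would use \(\|\identity{}-\tfrac{1}{\agents}\ones\ones^{\top}\|_{2}=1\) and the Lipschitz bound \(\|\G(t+1)\|\le\lips\,\hberr(t+1)\) from Lemma \ref{lem:portmanteuau 1}-\ref{it:port_3}. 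Plugging in the just-derived Row 4 bound for \(\hberr(t+1)\) and noting \(\lips/\strconv=\cn\) produces Row 2 entry by entry.

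\textbf{Row 3 (optimization error) — this is the main obstacle.} Starting from Lemma \ref{lem:portmanteuau 1}-\ref{it:port_2}, I need to show the Newton-like descent estimate
\[
\sqrt{\agents}\,\bigl\|\avgdmat(t)^{\top}-\dvar^{\ast}-\stsz\,\avgcdmat(t)^{\top}\bigr\|\le\bigl(1-\tfrac{\stsz}{\cn}\bigr)\opterr(t)+\stsz\cn\,\conerr(t)+\tfrac{\stsz}{\strconv}\gterr(t),
\]
after which the momentum contribution \(\hbstsz\avghbmat(t)^{\top}\) is handled by the averaging inequality \(\sqrt{\agents}\,\|\avghbmat(t)^{\top}\|\le\|\hbmat(t)\|=\hberr(t)\). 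The clean way to obtain the displayed bound is to add and subtract a reference Newton step \(\stsz [\bar H(t)]^{-1}\grd f(\avgdmat(t)^{\top})\), where \(\bar H(t)\) is chosen to express the gradient exactly as \(\bar H(t)(\avgdmat(t)^{\top}-\dvar^{\ast})\) via the mean-value representation \(\grd f(\avgdmat(t)^{\top})=\bigl(\int_{0}^{1}\hess f(\dvar^{\ast}+s(\avgdmat(t)^{\top}-\dvar^{\ast}))\,ds\bigr)(\avgdmat(t)^{\top}-\dvar^{\ast})\). The residual \(\stsz\bigl([\bar H(t)]^{-1}\grd f(\avgdmat(t)^{\top})-\avgcdmat(t)^{\top}\bigr)\) is then controlled using Lemma \ref{lem:portmanteuau 1}-\ref{it:port_3},\ref{it:port_5},\ref{it:port_4} to extract the \(\conerr\) and \(\gterr\) contributions, while the contractive factor on \(\opterr(t)\) comes from the spectral bound on \(\identity{}-\stsz[\bar H(t)]^{-1}\bar H(t)'\)-type products; the step-size assumption \(\stsz\le\strconv/\lips=1/\cn\) is exactly what ensures the eigenvalues of the resulting operator lie in \([1-\stsz\cn,\,1-\stsz/\cn]\subset[0,1]\), so that the spectral radius is \(1-\stsz/\cn\). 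Reconciling the Newton-direction averaging with the centralized reference is the delicate computational bookkeeping of the whole proposition; all the other rows are essentially bookkeeping on top of Lemma \ref{lem:portmanteuau 1}. Assembling the four bounds gives \(\errvec(t+1)\le\conmat(\stsz,\hbstsz)\errvec(t)\) entry-wise.
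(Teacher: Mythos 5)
Your rows 1, 2, and 4 are correct and coincide with the paper's proof: row 1 is the paper's \eqref{eq:con err}--\eqref{eq:con err result}; your row-4 chain through \(\norm{\cdmat(t)}\le\strconv^{-1}\norm{\gtmat(t)}\) and the split \(\norm{\gtmat(t)}\le\gterr(t)+\sqrt{\agents}\norm{\avggtmat(t)}\) is exactly \eqref{eq:random} combined with \eqref{eq:mom term final}; and your exact identity \(\gtmat(t+1)-\ones\avggtmat(t+1)=(\Wght-\tfrac{1}{\agents}\ones\ones^{\top})(\gtmat(t)-\ones\avggtmat(t))+(\identity{\agents}-\tfrac{1}{\agents}\ones\ones^{\top})\G(t+1)\) is a marginally cleaner route to \eqref{eq:grad track final} than the paper's projection inequality \eqref{eq:gt aux_1}, but produces the same row.

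The one substantive problem is in row 3, exactly where you flag the main obstacle. You propose adding and subtracting \(\stsz[\bar H(t)]^{-1}\grd\objfunc(\avgdmat(t)^{\top})\) with \(\bar H(t)\) the \emph{same} integral-mean Hessian that represents the gradient, i.e.\ \(\grd\objfunc(\avgdmat(t)^{\top})=\bar H(t)(\avgdmat(t)^{\top}-\dvar^{\ast})\). But then \([\bar H(t)]^{-1}\grd\objfunc(\avgdmat(t)^{\top})=\avgdmat(t)^{\top}-\dvar^{\ast}\) identically, the ``descent'' piece collapses to \((1-\stsz)(\avgdmat(t)^{\top}-\dvar^{\ast})\) with no operator product left for your spectral argument, and the residual contains \(\stsz\bigl(\identity{\dimdv}-\tfrac{1}{\agents}\sum_i[\hess\objfunc_i(\dvar_i(t))]^{-1}\bar H(t)\bigr)(\avgdmat(t)^{\top}-\dvar^{\ast})\), which by triangle inequality contributes up to \(\stsz(\cn-1)\opterr(t)\); the two pieces together give a coefficient \(1-\stsz(2-\cn)\ge 1\) whenever \(\cn\ge2\), so the \(1-\stsz/\cn\) entry is lost. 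The paper's split \eqref{eq:opt err} instead keeps the \emph{local} inverse Hessians in the reference step, \(\stsz\tfrac{1}{\agents}\sum_i[\hess\objfunc_i(\dvar_i(t))]^{-1}\grd\objfunc(\avgdmat(t)^{\top})\), and applies the mean-value representation only inside \(T_3\), so that the entire \(\opterr\)-dependence stays in the single operator \(\identity{\dimdv}-\stsz\bigl(\tfrac{1}{\agents}\sum_i[\hess\objfunc_i(\dvar_i(t))]^{-1}\bigr)\bar H(t)\), whose spectrum lies in \([1-\stsz\cn,\,1-\stsz/\cn]\) and is nonnegative precisely when \(\stsz\le\strconv/\lips\). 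Your closing remark about \(\identity{\dimdv}-\stsz[\bar H(t)]^{-1}\bar H(t)'\)-type products shows you have the right operator in mind, but it is inconsistent with the decomposition you wrote down: the contraction only survives if the \(\opterr\) terms are never separated by a triangle inequality. With that repair, the remaining \(T_4\)-type residual (via Lemma \ref{it:port_3} and the \(\strconv^{-1}\) bound on the inverse Hessians) and the momentum term \(\hbstsz\norm{\hbmat(t)}/\sqrt{\agents}\) go through exactly as you describe.
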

\begin{proof}
    The proof proceeds by deriving an upper bound on the errors in terms of the other errors. We start with the consensus error. Observe that
    \begin{align}\label{eq:con err}
        &\norm{\dmat(t+1) - \ones \avgdmat(t+1)} \\
        & = \Bigg \lVert \Big(\Wght \dmat(t) - \ones \avgdmat(t) \Big) + \stsz \bigg(\frac{1}{\agents}  \ones \ones^{\top} \cdmat(t) - \cdmat(t)\bigg)\nn \\
        & \hspace{1cm} + \hbstsz \bigg(\hbmat(t) - \frac{1}{\agents} \ones \ones^{\top} \hbmat(t) \bigg) \Bigg \rVert \nn\\
        & \le \specnorm \norm{\dmat(t) - \ones \avgdmat(t)} + \stsz \underbrace{\norm{\cdmat(t) - \ones \avgcdmat(t)}}_{T_1} + \hbstsz \underbrace{\norm{\hbmat(t) - \ones \avghbmat(t)}}_{T_2}.\nn
    \end{align}
    In the last inequality, we use the identity 
    \begin{align}\label{eq:submultiplicity}
        &\norm{(\Wght \dmat(t) - \ones \avgdmat(t)}\\
        & = \norm{\bigg(\Wght - \frac{1}{\agents}\ones \ones^{\top} \bigg)\big( \dmat(t) - \ones\avgdmat(t)\big)} \nn \\
        & \le \norm{\Wght - \frac{1}{\agents}\ones \ones^{\top}}_2 \norm{\dmat(t) - \ones\avgdmat(t)} \le \specnorm \norm{\dmat(t) - \ones\avgdmat(t)}.\nn
    \end{align}
    Invoking Lemma \ref{it:port_4} and \ref{it:port_6} to get an upper bound of the terms \(T_1\) and \(T_2\), the resultant expression of \eqref{eq:con err} is 
    \begin{align}
        \label{eq:con err result}
        &\norm{\dmat(t+1) - \ones \avgdmat(t+1)} \le \big(\specnorm + \stsz \cn \big) \norm{\dmat(t) - \ones \avgdmat(t)}  \nn\\
        & \hspace{1.5mm}\frac{\stsz}{\strconv} \norm{\gtmat(t) - \ones \grd \ol{\Objfunc}(\dmat(t))} + \stsz \cn \sqrt{\agents} \norm{\avgdmat(t)^{\top} - \dvar^{\ast}}\nn \\& \hspace{5cm} + \hbstsz \norm{\hbmat(t)}.
    \end{align}
    Let us now focus on the gradient-tracking error. Note that
    \begin{align}\label{eq:gt aux_1}
        &\norm{\gtmat(t+1) - \ones \avggtmat(t+1)}^2 \nn \\
        & = \norm{\gtmat(t+1) - \ones \avggtmat(t) + \ones \avggtmat(t) - \ones \avggtmat(t+1)}^2 \nn \\
        & = \norm{\gtmat(t+1) - \ones \avggtmat(t)}^2 + \norm{\ones \big(\avggtmat(t) -  \avggtmat(t+1)\big)}^2 \nn \\
        & \hspace{1cm} - 2 \agents \inprod{\frac{1}{\agents}\sum_{i=1}^{\agents} \gtvar_i(t+1) - \avggtmat(t)}{\avggtmat(t+1) - \avggtmat(t)} \nn \\
        & = \norm{\gtmat(t+1) - \ones \avggtmat(t)}^2 + \agents \norm{\avggtmat(t) -  \avggtmat(t+1)}^2 \nn \\
        & \hspace{0.5cm} - 2 \agents \norm{\avggtmat(t) -  \avggtmat(t+1)}^2 \le \norm{\gtmat(t+1) - \ones \avggtmat(t)}^2,
    \end{align}
    where 
    \begin{align}\label{eq:gt aux_2}
        &\norm{\gtmat(t+1) - \ones \avggtmat(t)} \nn \\ & \le  \norm{\Wght \gtmat(t) - \ones \avggtmat(t) + \grd \Objfunc\big(\dmat(t+1) \big) - \grd \Objfunc\big(\dmat(t) \big) }\nn \\
        & \le \specnorm \norm{\gtmat(t) - \ones \avggtmat(t)} + \norm{\grd \Objfunc\big(\dmat(t+1) \big) - \grd \Objfunc\big(\dmat(t) \big)} \nn \\
        & \le \specnorm \norm{\gtmat(t) - \ones \avggtmat(t)} + \lips \norm{\dmat(t+1) - \dmat(t)} \nn \\
        & \qquad \text{(from Lemma \ref{it:port_3})}\nn \\
        & = \specnorm \norm{\gtmat(t) - \ones \avggtmat(t)} + \lips \norm{\hbmat(t+1)}.
    \end{align}
    We restrict our attention to \(\norm{\hbmat(t+1)}\) for each \(t\). Recall that \(\quanto = \norm{\Wght - \identity{\dimdv}}_2\). Expanding the recursion for the decision variable in \eqref{eq:update rule}, and using \((\identity{\dimdv} - \Wght)\ones = 0\), we get
    \begin{align}\label{eq:mom term}
        &\norm{\hbmat(t+1)} = \norm{\Wght \dmat(t) - \stsz \cdmat(t) + \hbstsz \hbmat(t) - \dmat(t)} \nn \\
        & \le \norm{\Wght \dmat(t) - \stsz \cdmat(t)  - \dmat(t)} + \norm{\hbstsz \hbmat(t)} \nn \\
        & \le \norm{\Wght \dmat(t) + (\identity{\dimdv} - \Wght)\ones \avgdmat(t) - \stsz \cdmat(t)  - \dmat(t)} + \norm{\hbstsz \hbmat(t)}  \nn \\
        & = \norm{\Wght \big(\dmat(t) - \ones \avgdmat(t) \big) - \identity{\dimdv} \big(\dmat(t) - \ones \avgdmat(t) \big) - \stsz \hbmat(t)} \nn \\
        & \hspace{7cm} + \hbstsz \norm{\cdmat(t)}\nn \\
        & \le \quanto \norm{\dmat(t) - \ones \avgdmat(t)} + \stsz \norm{\cdmat(t)} + \hbstsz \norm{\hbmat(t)}.
    \end{align}
    The last inequality is obtained using a similar argument as in \eqref{eq:submultiplicity}. In particular, we use \(\norm{(\Wght - \identity{\dimdv})(\dmat(t) - \ones\avgdmat(t)} \le \norm{\Wght - \identity{\dimdv}}_2 \norm{\dmat(t) - \ones\avgdmat(t)}.\)
    From \eqref{eq:random} in Appendix \ref{sec:proof of lemma} and Lemma \ref{it:port_4}, \(\norm{\cdmat(t)} \le \frac{1}{\strconv}\norm{\gtmat(t) - \ones \grd \ol{\Objfunc}(\dmat(t))} + \frac{\lips}{\strconv} \norm{\dmat(t) - \ones \avgdmat(t)} + \frac{\sqrt{\agents} \lips}{\strconv} \norm{\avgdmat(t)^{\top} - \dvar^{\ast}}.\)
    Putting together everything, the expression for \(\norm{\hbmat(t)}\) in \eqref{eq:mom term} is given by
    \begin{align}\label{eq:mom term final}
        &\norm{\hbmat(t+1)} \le (\quanto + \stsz \cn )\norm{\dmat(t) - \ones \avgdmat(t)} \nn \\ &+ \frac{\stsz}{\strconv}\norm{\gtmat(t) - \ones \grd \ol{\Objfunc}(\dmat(t))} + \stsz 
        \cn \sqrt{\agents} \norm{\avgdmat(t)^{\top} - \dvar^{\ast}} \nn \\ & \hspace{5cm}+ \hbstsz \norm{\hbmat(t)}.
    \end{align}
    Substituting \eqref{eq:mom term final} back in \eqref{eq:gt aux_2}, the final expression for the gradient-tracking error is given by
    \begin{align}
        \label{eq:grad track final}
        &\norm{\gtmat(t+1) - \ones \avggtmat(t)}\nn \le \lips(\quanto + \stsz \cn)\norm{\dmat(t) - \ones \avgdmat(t)}\\& + (\specnorm + \stsz \cn) \norm{\gtmat(t) - \ones\grd \ol{\Objfunc}(\dmat(t)} + \stsz \lips \cn \sqrt{\agents}\norm{\avgdmat(t)^{\top} - \dvar^{\ast}}\nn  \\& \hspace{5cm}+ \lips \hbstsz \norm{\hbmat(t)}.
    \end{align}
    Using Lemma \ref{it:port_2}, we expand the optimality error to get
    \begin{align}\label{eq:opt err}
        &\norm{\avgdmat(t+1)^{\top} - \dvar^{\ast}}\nn\\
        & \le \underbrace{\norm{\avgdmat(t)^{\top} - \dvar^{\ast} - \stsz \frac{1}{\agents}\sum_{i=1}^{\agents} \big[\hess \objfunc_i(\dvar_i(t))\big]{\inverse} \grd \objfunc\big( \avgdmat(t)^{\top}\big)}}_{T_3} \nn\\
        & + \underbrace{\norm{\stsz \frac{1}{\agents}\sum_{i=1}^{\agents} \big[\hess \objfunc_i(\dvar_i(t))\big]{\inverse} \Big(\grd \objfunc\big(\avgdmat(t)^{\top}\big) - \gtvar_i(t)  \Big)}}_{T_4}\nn \\
        & \hspace{5cm}+ \underbrace{\norm{\hbstsz \frac{1}{\agents}\ones^{\top} \hbmat(t)}}_{T_5}.
    \end{align}
    Let us first start with \(T_3\). Observe that 
    \begin{align*}
        &T_3  \le \bigg \lVert\bigg(\identity{\agents} - \stsz \frac{1}{\agents} \sum_{i=1}^{\agents} \big[\hess \objfunc_i(\dvar_i(t))\big]{\inverse} \cdots\\&\cdots \int_0^1 \hess \objfunc\Bigl(\dvar^{\ast} + \varsigma \bigl(\avgdmat(t)^{\top} - \dvar^{\ast} \bigr)\Bigr)\odif{\varsigma} \bigg) \bigg \rVert \norm{\bigl(\avgdmat(t)^{\top} - \dvar^{\ast} \bigr)}.
    \end{align*}
    From the hypothesis of Proposition \ref{prop:lin ineq}, if \(\stsz \le \frac{\strconv}{\lips}\), then we can write
    \begin{align*}
        &0 \preceq \bigg(1 - \stsz\frac{\lips}{\strconv}\bigg)\identity{\dimdv}\preceq \identity{\agents} - \stsz \frac{1}{\agents} \sum_{i=1}^{\agents} \big[\hess \objfunc_i(\dvar_i(t))\big]{\inverse} \cdots\\&\cdots \int_0^1 \hess \objfunc\Bigl(\dvar^{\ast} + \varsigma \bigl(\avgdmat(t)^{\top} - \dvar^{\ast} \bigr)\Bigr)\odif{\varsigma} \preceq \bigg(1 - \stsz \frac{\strconv}{\lips}\bigg) \identity{\dimdv}.
    \end{align*}
    Therefore, we get
    \begin{align}\label{eq:t3}
        T_3 \le \bigg(1 - \stsz \frac{\strconv}{\lips}\bigg)\norm{\bigl(\avgdmat(t)^{\top} - \dvar^{\ast} \bigr)}.
    \end{align}
    Moreover, the term \(T_5\) can be upper bounded as:
    \begin{align}\label{eq:t5}
        T_5 &= \norm{\hbstsz \frac{1}{\agents} \sum_{i=1}^{\agents}\hbvar_i(t)} \le \hbstsz \frac{1}{\agents}\sum_{i=1}^{\agents}\norm{\hbvar_i(t)}\nn 
        \\
        & \le \hbstsz\sqrt{\frac{1}{\agents} \sum_{i=1}^{\agents} \norm{\hbvar_i(t)}^2} = \frac{\hbstsz}{\sqrt{\agents}} \norm{\hbmat(t)}. 
    \end{align}
    Now we analyze the term \(T_4\). Observe that
    \begin{align}\label{eq:t4}
        &T_4 \le \frac{\stsz}{\strconv \agents} \sum_{i=1}^{\agents}\norm{\grd \objfunc\big(\avgdmat(t)^{\top}\big) - \gtvar_i(t) } \nn \\
        & \le \frac{\stsz}{\strconv \agents} \sum_{i=1}^{\agents}\norm{\grd \objfunc\big(\avgdmat(t)^{\top}\big) + \grd \ol{\Objfunc}\big( \dmat(t)\big) - \grd \ol{\Objfunc}\big( \dmat(t)\big) - \gtvar_i(t)}\nn\\
        & \le \frac{\stsz}{\strconv \agents}\sum_{i=1}^{\agents} \norm{\gtvar_i(t) - \grd \ol{\Objfunc}\big( \dmat(t)\big)} \nn \\ & \hspace{2cm}+ \frac{\stsz}{\strconv \agents}\sum_{i=1}^{\agents} \norm{\grd \objfunc\big(\avgdmat(t)^{\top}\big) - \grd \ol{\Objfunc}\big( \dmat(t)\big)} \nn \\
        & \le \frac{\stsz}{\strconv \sqrt{\agents}}\norm{\gtmat(t) - \ones \avggtmat(t)} + \frac{\stsz\lips}{\strconv \sqrt{\agents}} \norm{\dmat(t) - \ones \avgdmat(t)} \nn\\ & \hspace{3cm}\text{(invoking Lemma \ref{it:port_3}).}
    \end{align}
    Combining \eqref{eq:t3}, \eqref{eq:t4}, and \eqref{eq:t5}, \eqref{eq:opt err} implies
    \begin{align}
        \label{eq:opt err final}
        &\sqrt{\agents}\norm{\avgdmat(t+1)^{\top} - \dvar^{\ast}} \le \stsz \cn \norm{\dmat(t) - \ones \avgdmat(t)}  \\
        &   +\frac{\stsz}{\strconv}\norm{\gtmat(t) - \ones \avggtmat(t)}  + \bigg(1 - \stsz \frac{\strconv}{\lips}\bigg)\sqrt{\agents}\norm{\bigl(\avgdmat(t)^{\top} - \dvar^{\ast} \bigr)} \nn \\
        &   \hspace{6cm}+ \hbstsz \norm{\hbmat(t)}.\nn
    \end{align}
    The final expression for the inequality \(\errvec(t+1) \le \conmat(\stsz,\hbstsz)\errvec(t),\)
    follows from \eqref{eq:con err result}, \eqref{eq:grad track final}, \eqref{eq:opt err final}, and \eqref{eq:mom term final}, completing the proof.
\end{proof}
We are now ready to state the main result of this article, which establishes a set of sufficient conditions on \((\stsz, \hbstsz)\) such that \(\spec\big(\conmat(\stsz, \hbstsz)\big) < 1\). 
\begin{theorem}
    \label{th:key}
    Consider Algorithm \ref{alg:sec_ord_comp} with its notations, and suppose that Assumptions \ref{assum:on graph} and \ref{assum:Standard assumptions} hold. Define the quantity \(\ol{\specnorm} \Let 1 - \specnorm\), and 
    \(\err \Let (\err_1, \err_2, \err_3, \err_4)\) with \(\err_i > 0\) for each \(i \in \aset[]{1,2,\ldots, \agents}\), such that 
    \begin{align}
        \label{eq:suff cond1}
        0 < \err_1 < \min\aset[\bigg]{\frac{\ol{\specnorm}\err_2}{\lips \quanto}, \frac{\err_3}{\cn^2} -  \frac{\err_2}{\strconv \cn}, \frac{\err_4}{\quanto}}. 
    \end{align}
    If \((\stsz,\hbstsz)\) satisfy
    \begin{align}
        \label{eq:parameters}
        \begin{cases}
            0 < \stsz < \min \aset[\Big]{\frac{1}{\cn}, \frac{\ol{\specnorm}\err_1}{\wt{\err}}, \frac{\ol{\specnorm} \err_2 - \lips \quanto \err_1}{\ol{\err}}, \frac{\err_4 - \quanto \err_1}{\wt{\err}}}\vspace{1.5mm}\\
            0 < \hbstsz < \min \left \{\frac{\wt{\err}}{\err_4}\Big(\frac{\ol{\specnorm}\err_2}{\wt{\err}} - \stsz \Big), \frac{\ol{\err}}{\lips \err_4}\Big( \frac{\ol{\specnorm} \err_2 - \lips \quanto \err_1}{\ol{\err}} - \stsz\Big),\cdots \right. \\ \hspace{1cm} \left. \cdots, \frac{\stsz}{\err_4} \big( \frac{\err_3}{\cn} - \cn \err_1 - \frac{\err_2}{\strconv}\big), \frac{\wt{\err}}{\err_4}\Big(\frac{\err_4 - \quanto \err_1}{\wt{\err}} - \stsz \Big)\right \},
        \end{cases}
    \end{align}
    where \(\wt{\err} \Let \cn(\err_1 + \err_3) +  \frac{\err_2}{\strconv}\) and \(\ol{\err} \Let \lips \cn (\err_1 + \err_3) +  \cn \err_2\), then \(\spec\big((\conmat(\stsz, \hbstsz)\big) < 1\), and \(\errvec(t) \xrightarrow[t \ra +\infty]{} 0\) globally, at a linear rate of \(\Bigoh\Big( \spec(\conmat(\stsz, \hbstsz))^t \Big).\)
\end{theorem}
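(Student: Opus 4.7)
The plan is to establish $\spec\bigl(\conmat(\stsz,\hbstsz)\bigr) < 1$ via the standard Perron--Frobenius criterion for nonnegative matrices, and then to promote this to global linear convergence by iterating the linear inequality of Proposition~\ref{prop:lin ineq}. Note first that every entry of $\conmat(\stsz,\hbstsz)$ in \eqref{eq:lin system ineq} is nonnegative whenever $1 - \stsz/\cn \ge 0$, i.e.\ $\stsz \le 1/\cn$, which is included in \eqref{eq:parameters}. The Collatz--Wielandt characterization $\spec(\conmat) = \inf_{v > 0} \max_i (\conmat v)_i/v_i$ then implies that it suffices to exhibit a strictly positive vector $\err > 0$ such that $\conmat(\stsz, \hbstsz)\,\err < \err$ componentwise; the quantity $\rho \Let \max_i (\conmat\err)_i/\err_i$ is then an upper bound on $\spec(\conmat)$, and the hypotheses are designed precisely to make this construction work with the specific $\err$ appearing in the statement.

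Following this, I would expand the four scalar inequalities $(\conmat\err)_i < \err_i$ using the explicit form of $\conmat$, group terms by powers of $\stsz$ and $\hbstsz$, and invoke the abbreviations $\wt{\err}$ and $\ol{\err}$ from the theorem. The four rows simplify respectively to
\begin{align*}
    \stsz\,\wt{\err} + \hbstsz\,\err_4 &< \ol{\specnorm}\,\err_1,\\
    \stsz\,\ol{\err} + \lips\,\hbstsz\,\err_4 &< \ol{\specnorm}\,\err_2 - \lips\quanto\,\err_1,\\
    \hbstsz\,\err_4 &< \stsz\Bigl(\tfrac{\err_3}{\cn} - \cn\,\err_1 - \tfrac{\err_2}{\strconv}\Bigr),\\
    \stsz\,\wt{\err} + \hbstsz\,\err_4 &< \err_4 - \quanto\,\err_1.
\end{align*}
Each of these can be rearranged to an explicit upper bound $\hbstsz < (\cdots)$, and those four bounds are precisely the four upper bounds in the $\hbstsz$-line of \eqref{eq:parameters}. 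For each such upper bound to be positive for some $\stsz>0$, two conditions are needed: (i) the part of the right-hand side independent of $\stsz$ and $\hbstsz$ must be strictly positive, and (ii) $\stsz$ itself must be small enough. Requirement (i), applied to rows 2, 3, and 4, yields exactly the three upper bounds on $\err_1$ in \eqref{eq:suff cond1} (row 1 being automatic since $\ol{\specnorm}>0$ by Proposition~\ref{prop:lin ineq}), while requirement (ii) produces the four upper bounds on $\stsz$ listed in \eqref{eq:parameters}. Thus \eqref{eq:suff cond1}--\eqref{eq:parameters} jointly guarantee $\conmat(\stsz, \hbstsz)\,\err < \err$, hence $\spec\bigl(\conmat(\stsz, \hbstsz)\bigr) < 1$.

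For the convergence-rate assertion, since $\conmat \ge 0$ entrywise, the componentwise inequality $\conmat\err \le \rho\err$ can be left-multiplied by $\conmat$ without reversing order, giving $\conmat^t\err \le \rho^t\err$ by induction. Combined with the linear inequality $\errvec(t+1) \le \conmat\,\errvec(t)$ from Proposition~\ref{prop:lin ineq} and any scaling constant $c > 0$ with $\errvec(0) \le c\,\err$, one obtains $\errvec(t) \le c\rho^t\,\err \to 0$, establishing global linear convergence at rate $\Bigoh\bigl(\spec(\conmat(\stsz,\hbstsz))^t\bigr)$. The main obstacle is purely bookkeeping: one must carefully expand all four rows, recognize the groupings that define $\wt{\err}$ and $\ol{\err}$, and verify that the auxiliary conditions \eqref{eq:suff cond1} on $\err_1$ are exactly what renders the system of upper bounds on $(\stsz,\hbstsz)$ in \eqref{eq:parameters} simultaneously feasible with both coordinates strictly positive, so that the admissible parameter region is nonempty.
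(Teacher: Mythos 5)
Your proposal is correct and follows essentially the same route as the paper: both exhibit a positive vector \(\err\) with \(\conmat(\stsz,\hbstsz)\,\err < \err\) (the paper cites Horn--Johnson, Corollary 8.1.29, which is the same Perron--Frobenius/Collatz--Wielandt bound you invoke), derive \eqref{eq:suff cond1}--\eqref{eq:parameters} by the identical row-by-row expansion, and then iterate the linear inequality of Proposition~\ref{prop:lin ineq} to get the rate. (One harmless slip: \(1-\stsz/\cn\ge 0\) is equivalent to \(\stsz\le\cn\), not \(\stsz\le 1/\cn\); the stronger condition \(\stsz<1/\cn\) is nonetheless required so that Proposition~\ref{prop:lin ineq} applies in the first place.)
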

\begin{proof}
    We start by establishing a set of sufficient conditions such that 
    \begin{equation}\label{eq:linear ineq final}
        \conmat(\stsz,\hbstsz) \err <  \err \text{ for every fixed and positive }\err>0,
    \end{equation}
    where \(\conmat(\stsz,\hbstsz)\) is defined in \eqref{eq:lin system ineq}. The proof follows by identifying an admissible vector \(\err>0\) such that \eqref{eq:linear ineq final} is satisfied, giving rise to a set of inequalities for \((\stsz, \hbstsz)\), for the fixed and admissible \(\err\). We then appeal to \cite[Corollary \(8.1.29\)]{ref:RAJ-CRJ-13} to prove the assertion. The condition \eqref{eq:linear ineq final} is analyzed row-wise, in what follows. 
    
    Let us consider the first row of \eqref{eq:linear ineq final}, which is given by the expression
    \begin{align}\label{eq:first row}
        \big(\specnorm + \stsz \cn \big) \err_1  + \frac{\stsz}{\strconv} \err_2 + \stsz \cn  \err_3 + \hbstsz \err_4 <  \err_1,
    \end{align}
    which can be rewritten as \(\hbstsz \err_4 < (1 - \specnorm)\err_1 - \stsz \Big(\cn(\err_1 + \err_3) + \frac{\err_2}{\strconv} \Big) = \ol{\specnorm}\err_1 - \wt{\err} \stsz\).
    If \((\stsz, \hbstsz)\) are picked such that 
    \begin{align}
        \label{eq:cond1}
        0 < \stsz < \frac{\ol{\specnorm}\err_1}{\wt{\err}} \text{ and }0 < \hbstsz < \frac{\wt{\err}}{\err_4}\bigg(\frac{\ol{\specnorm}\err_1}{\wt{\err}} - \stsz \bigg),
    \end{align}
    then \eqref{eq:first row} is satisfied and remains valid. We now consider the second row, which is 
    expressed as
    \begin{align}\label{eq:prefinal row2}
        \lips(\quanto + \stsz \cn)\err_1 + (\specnorm + \stsz \cn) \err_2 &+ \stsz \lips \cn  \err_3 + \lips \hbstsz \err_4 <  \err_2,
    \end{align}
    which we write as
    \begin{align*}
        \lips \hbstsz \err_4 
        & < \err_2 - (\quanto + \stsz \cn)\err_1 -(\specnorm + \stsz \cn) \err_2 - \stsz \lips \cn  \err_3 \\
        & = \Big((1 - \specnorm)\err_2
        - \lips \quanto \err_1 \Big) - \stsz \big(\lips 
        \cn (\err_1 + \err_3) + \cn \err_2\big) \\
        & = \Big(\ol{\specnorm}\err_2 - \lips \quanto \err_1\Big) - \ol{\err} \stsz. 
 \end{align*}
 implying that \(\hbstsz < \frac{(\ol{\specnorm}\err_2 - \lips \quanto \err_1) - \ol{\err} \stsz}{\lips \err_4}\). Consequently, it is sufficient to choose \((\stsz, \hbstsz)\) satisfying
 \begin{align}
     \label{eq:step-size final}
     \begin{cases}
        0 < \stsz < \frac{\ol{\specnorm}\err_2 - \lips \quanto \err_1}{\ol{\err}}, \\
        0 < \hbstsz < \frac{\ol{\err}}{\lips \err_4}\Big( \frac{\ol{\specnorm} \err_2 - \lips \quanto \err_1}{\ol{\err}} - \stsz\Big)\text{ and }
        \err_1 < \frac{\ol{\specnorm}\err_2}{\lips \quanto},
    \end{cases}
 \end{align}
 such that \eqref{eq:prefinal row2} to hold. Similarly, for the third row, we write
 \begin{align}\label{eq:prefinal row3}
     \hbstsz \err_4 < \stsz \frac{\err_3}{ \cn} - \stsz \cn \err_1 - \stsz \frac{\err_2}{\strconv}, 
 \end{align}
 from which \(\hbstsz < \frac{\stsz \frac{\err_3}{\cn} - \stsz \cn \err_1 - \stsz \frac{\err_2}{\strconv}}{\err_4} = \frac{\stsz}{\err_4} \big( \frac{\err_3}{\cn} - \cn \err_1 - \frac{\err_2}{\strconv}\big)\). Of course, one can check that \(\frac{\err_3}{\cn} -  \frac{\err_2}{\strconv} > \cn \err_1 \) is necessary for \eqref{eq:prefinal row3} to be true. Similar arguments can be repeated for the fourth row, and we directly write the sufficient conditions, as upper bounds, on the step-sizes \((\stsz, \hbstsz)\),
 \begin{align}\label{eq:row4}
    \begin{cases}
        0 < \stsz < \frac{\err_4 - \quanto \err_1}{\wh{\err}},\\
        0 < \hbstsz < \frac{\wt{\err}}{\err_4}\bigg( \frac{\err_4 - \quanto \err_1}{\wh{\err}} - \stsz\bigg) \text{ and }
        \err_1 < \frac{1}{\quanto}\err_4.
    \end{cases}
 \end{align}
 Note that \eqref{eq:cond1}, \eqref{eq:step-size final}, \eqref{eq:prefinal row3}, and \eqref{eq:row4} are sufficient conditions for \eqref{eq:linear ineq final}, for every \(\err>0\). The first assertion, \(\spec\big((\conmat(\stsz, \hbstsz)\big) < 1\), now follows from \cite[Corollary \(8.1.29\)]{ref:RAJ-CRJ-13}. For the second assertion, since \(\spec\big( \conmat(\stsz, \hbstsz)\big)< 1\), we have \( \conmat(\stsz, \hbstsz)^t \xrightarrow[t \ra +\infty]{} 0\) for \((\stsz, \hbstsz)\) satisfying \eqref{eq:suff cond1} and \eqref{eq:parameters}.  Consequently,  for every initial condition \(\errvec(0)\), \(0 \le \limsup_{t \uparrow + \infty} \errvec(t) \le \liminf_{t \uparrow + \infty} \conmat(\stsz, \hbstsz)^t \errvec(0) \xrightarrow[t \ra +\infty]{}0\) at a linear rate \(\Bigoh\big(\spec(\conmat(\stsz, \hbstsz))^t\big)\). The proof is now complete.
\end{proof}
\begin{remark}
    \label{rem:corollary}
    Following a similar line of arguments, one can obtain a feasible vector satisfying the inequality: for \(c>0\), \(\conmat(\stsz,\hbstsz)\err < \big(1  - \frac{\stsz}{c \cn} \big) \err\), such that a \emph{specific linear rate} \(\Bigoh{\Big(\big(1 - \frac{\stsz}{c \cn} \big)^t}\Big)\), where \(t\) is the iteration, can be attained. Moreover, the obtained bounds \eqref{eq:suff cond1} and \eqref{eq:parameters}, are non-conservative for positive systems. For a particular choice of an admission \(\err\), one can decompose the matrix in \eqref{eq:lin system ineq} as \(\conmat(\stsz,\hbstsz) = \conmat_0 + \stsz \conmat_1 + \hbstsz\conmat_2\). Choosing \(\err\) to be the Perron vector of \(\conmat_0\) \cite[Corollary \(8.2.6\)]{ref:RAJ-CRJ-13} allows one to establish a tight bound on \((\stsz,\hbstsz)\) from  \(\stsz \conmat_1 + \hbstsz\conmat_2 < (1 - \spec(\conmat_0))\err\).
\end{remark}
\begin{remark}
\label{rem:cn and graph}
An ill-conditioned Hessian may severely affect the step-size. Observe that if \(\cn\) is a large quantity, then from \eqref{eq:suff cond1}, \(\err_1\) is small, which implies that the step-size \(\stsz\) is small and consequently, from \eqref{eq:prefinal row3} the momentum coefficient \(\hbstsz\) must also be chosen to be small.
\end{remark}
\begin{remark}\label{rem:analytical}
    Note that the theoretical analysis of the algorithm done in this article establishes a linear rate of convergence and does not provide an analytical explanation on why acceleration is achieved by \(\algoname\). However, we believe that the analysis outlined in \cite{ref:JKW-CHL-AW-BH-22} can be extended to the distributed regime to achieve an accelerated rate locally, which requires further investigation and will be reported elsewhere. Establishing global accelerated convergence, even in the centralized case with strongly convex functions, remains an open question. An elaborate discussion may be found in \cite{ref:EG-HRF-MJ-15}.
\end{remark}


\section{Experiments}
\label{sec:experiments}
We compare \(\algoname\) with accelerated first-order algorithms --- \(\mathcal{A}\mathcal{B}m\) \cite{ref:RX-UAK-19} and Acc-DNGD-SC \cite{ref:GQ-NL-19} --- and non-accelerated algorithms --- Network-GIANT \cite{ref:AM-GS-LS-SD-23} and the first-order distributed algorithm \cite{ref:GQ-NL-17} (which we refer to as GradTrack in the sequel) --- and empirically demonstrate accelerated rate of convergence in \(\algoname\). 

Given the local datasets  \(\mathcal{D}_i \Let \aset[\Big]{\big(u^i_j,v^i_j\big) \in \Rbb^{1 \times \dimdv} \times \aset[]{-1,1}}_{j=1}^{m_i}\) available to the \(i\)-th agent, we consider a binary classification based logistic regression problem, expressed as the following optimization problem: for \(\regu>0\),
\begin{align*}
    \min_{\dvar \in \Rbb^{\dimdv}} \frac{1}{\agents}\sum_{i=1}^{\agents} \frac{1}{m_i}\sum_{j=1}^{m_i} \log\Big( 1 + \exp\big(-v^i_j(\dvar^{\top}u^i_j) \big)\Big) + \frac{\regu}{2} \norm{\dvar}^2.
\end{align*}
For the experiments we considered two undirected graphs  --- \(\wt{d}\)-regular expander graph with degree equals \(14\) and \(\specnorm = 0.3026\) (indicating dense connectivity), and an Erd\H{o}s-R\'enyi graph with edge/connection probability \(\wt{p} = 0.3\) and \(\specnorm = 0.724017\) (indicating sparse connectivity) --- with \(\agents = 20\) connected agents in both cases to model connectivity among the agents.  The Metropolis-Hastings algorithm was used to construct the consensus weight matrix, assuming that each agent can exchange data with its \(1\)-hop neighbors. The distributed classification was performed on the CovType dataset \cite{ref:DD-CG-19} after applying standard principal component analysis (PCA) to reduce the dataset further, resulting in \(566602\) sample points and \(p = 10\) features. The samples were randomly shuffled to minimize bias and distributed homogeneously among the agents. All the variables are shuffled according to Algorithm \ref{alg:sec_ord_comp}.

The hyperparameters for the algorithms considered for comparison were tuned to obtain the best performance. For all reported experiments, the regularizer \(\regu\) is selected to be \(0.05\). Table \ref{tab:prog-comp2} summarizes the different hyperparameters chosen for the experiment.  
\begin{table}[htbp]
\centering
\scriptsize 
\begin{adjustbox}{max width=0.8\linewidth} 
\begin{tblr}{
  colspec = {l |cccc},
  row{1,2} = {azure9},
  hline{1,3,Z} = {2pt},
  hline{2} = {1pt},
  column{2-5} = {c},
}
\SetTblrInner{rowsep=1pt}
 &
\SetCell[c=2]{c} \(\wt{d} = 14\) & &
\SetCell[c=2]{c} \(\wt{p} = 0.3\)  &
\\
Algorithms & \(\stsz\) & \(\hbstsz\) & \(\stsz\) & \(\hbstsz\) \\

GradTrack \cite{ref:GQ-NL-17} &
0.095 & -- &  &  \\

\(\mathcal{A}\mathcal{B}m\) \cite{ref:RX-UAK-19} &
0.18 & 0.65 & 0.2 & 0.65   \\

Acc-DNGD-SC \cite{ref:GQ-NL-19} &
0.28 & 0.7 & 0.3 & 0.7  \\

Network-GIANT \cite{ref:AM-GS-LS-SD-23} &
0.9 & -- &  & \\

\(\algoname\) &
0.15 & 0.5 & 0.13 & 0.5   \\

\hline[2pt]
\end{tblr}
\end{adjustbox}
\caption{Hyperparameters chosen for different algorithms.}
\label{tab:prog-comp2}
\end{table}

\subsubsection*{Results and discussions}
We compared the optimal solutions obtained via the distributed algorithms with the true optimal solution \(\dvar^{\ast}\) and the true optimal value \(\objfunc(\dvar^{\ast})\), computed using the centralized Newton-Raphson algorithm with backtracking line search. Figures \ref{fig:comparison} (for \(\wt{d}\)-regular expander graph) and \ref{fig:comparison_er} (for Erd\H{o}s-R\'enyi graph) show that convergence of \(\algoname\) is at least \emph{twice} faster than Network-GIANT, which is reported to be faster than the other distributed algorithms for both graphs. This observation is more prominent on densely connected graphs (\(\wt{d}\)-regular graph).
\begin{figure}[!htbp]
    \centering
    \includegraphics[scale = 0.16]{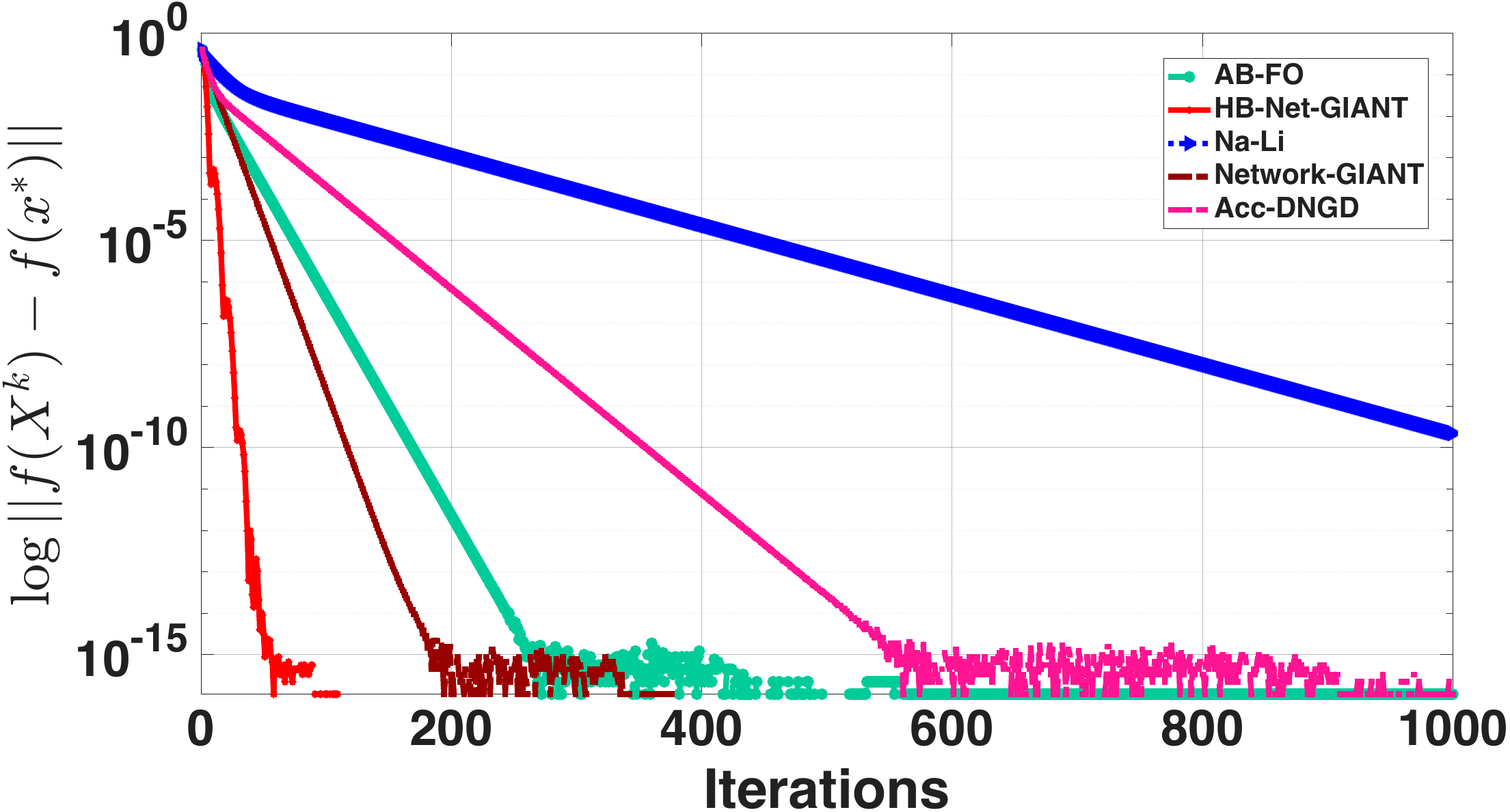}
    \caption{Comparison of \(\algoname\) with GradTrack, \(\mathcal{A}\mathcal{B}m\), Acc-DNGD-SC, and Network-GIANT for \(\wt{d}\)-regular expander graph, with \(\wt{d} = 14\).}
    \label{fig:comparison}
\end{figure}
\begin{figure}[!htbp]
    \centering
    \includegraphics[scale = 0.16]{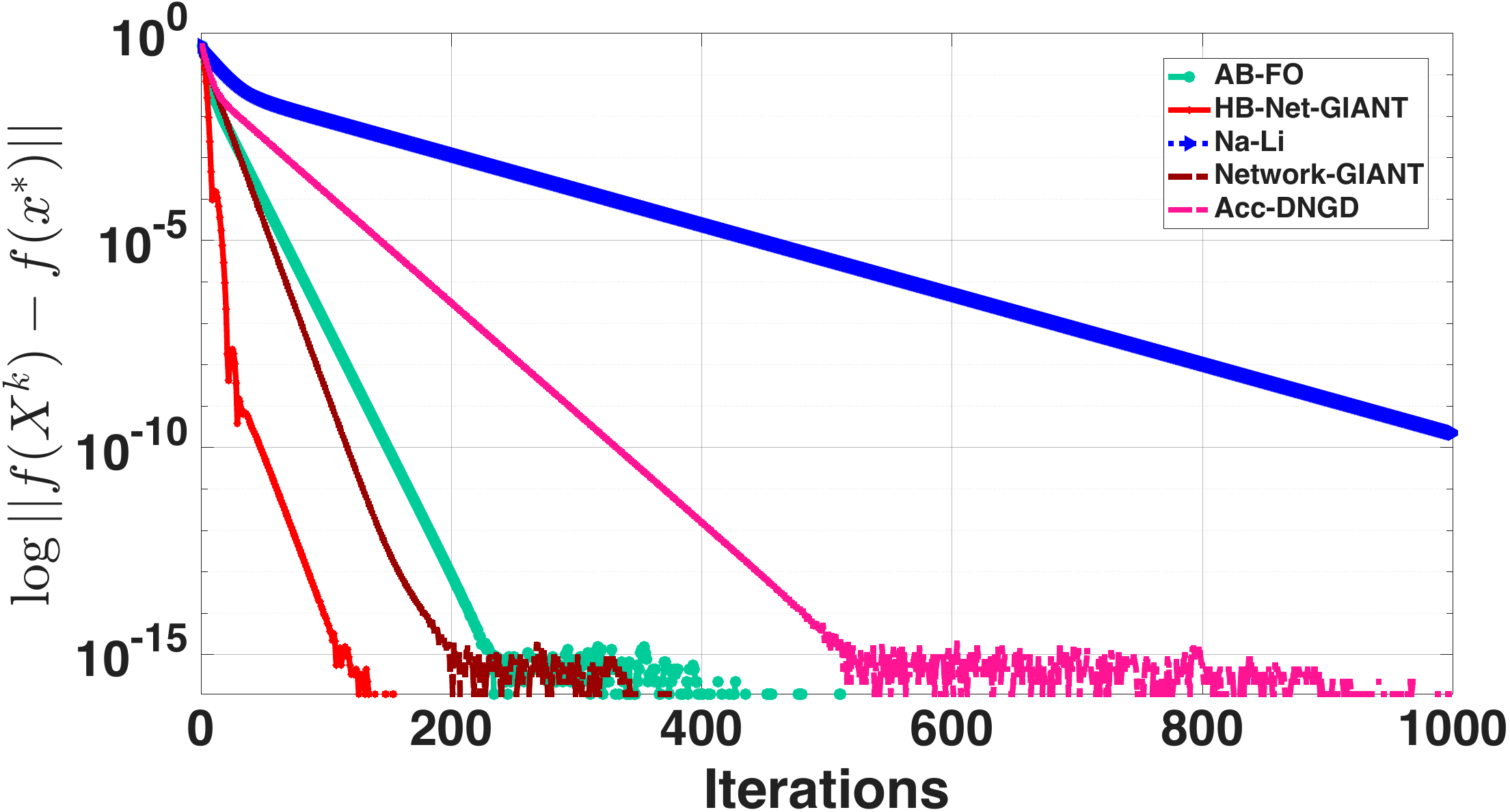}

    \caption{Comparison of \(\algoname\) with GradTrack, \(\mathcal{A}\mathcal{B}m\), Acc-DNGD-SC, and Network-GIANT for Erd\H{o}s-R\'enyi graph, with \(\wt{p} = 0.3\).}
    \label{fig:comparison_er}
\end{figure}
Theoretically, from Theorem \ref{th:key}, this rate is linear and depends on the spectral radius of the matrix \(\conmat(\stsz, \hbstsz)\), defined in \eqref{eq:lin system ineq}; specifically, the rate is given by \(\Bigoh\big(\spec(\conmat(\stsz,\hbstsz)) ^t\big)\), where \(t\) denotes the iteration index. However, as mentioned in Remark \ref{rem:analytical}, Theorem \ref{th:key} does not reveal acceleration due to the heavy-ball momentum, which requires further investigation and will be reported elsewhere.

\section{Conclusion}
\label{sec:conclusion}
This article presents a heavy-ball momentum-enhanced enhanced fully distributed approximate Newton-type optimization algorithm. We establish sufficient conditions for linear convergence, governed by the regularity of the objective functions and the connectivity of the underlying graph. Numerical experiments demonstrate superior performance compared to their existing first-order and non-accelerated second-order (Network-GIANT) counterparts. 

This work lays the foundation for a more general class of \emph{second-order Newton-type} algorithms \emph{with momentum}, and motivates us to investigate several open questions that exist to date, one of which includes analytically proving \emph{local} acceleration in the fully distributed regime, for smooth, strongly convex as well as smooth, non-strongly convex optimization problems.

\appendices
\section{Proof of Lemma \ref{lem:portmanteuau 1}}
\label{sec:proof of lemma}

    We refer the reader to \cite[Lemma \(7\)]{ref:GQ-NL-17} for the proof of the first part.
    The proof for the assetion of Lemma \ref{it:port_2} is straightforward, and follows by multiplying both sides of the update rule for the decision variable in \eqref{eq:update rule}.
    
    For the proof of Lemma \ref{it:port_3}, we invoke Lemma \ref{it:port_1},
    \begin{align*}
        &\norm{\avggtmat(t) - \grd \objfunc\bigl( \avgdmat(t)^{\top} \bigr)} = \norm{\grd \av{\Objfunc}(\dmat(t)) - \grd \objfunc\bigl( \avgdmat(t)^{\top} \bigr)}\\
        & \le \frac{1}{\agents} \sum_{i=1}^{\agents} \norm{\grd \objfunc_i ( \dvar_i(t)) - \grd \objfunc_i\bigl( \avgdmat(t)^{\top}\bigr)} \\ 
        & \le \frac{\lips}{\agents} \sum_{i=1}^{\agents} \norm{\dvar_i(t) - \avgdmat(t)^{\top}} \le \sqrt{ \frac{\lips^2}{\agents} \sum_{i=1}^{\agents} \norm{\dvar_i(t) - \avgdmat(t)^{\top}}^2 }\\
        &= \frac{\lips}{\sqrt{\agents}} \norm{\dmat(t) - \ones \avgdmat(t)},
    \end{align*}
    completing the proof for the first part of Lemma \ref{it:port_3}. The second part follows a similar argument:
    \begin{align*}
        &\norm{\grd \Objfunc\big(\dmat(t+1) \big) - \grd \Objfunc\big(\dmat(t) \big)} \\
        &\le \sqrt{\sum_{i=1}^{\agents} \norm{\grd \objfunc_i(\dvar_i(t+1)) - \grd \objfunc_i(\dvar_i(t))}^2}\\
        & \le \sqrt{\lips^2\sum_{i=1}^{\agents}\norm{\dvar_i(t+1) - \dvar_i(t)}^2} \text{ (from Assumption \ref{assum:Standard assumptions})}\\
        & \le \lips  \norm{\dmat(t+1) - \dmat(t)}.
    \end{align*}
    
    We now focus on Lemma \ref{it:port_5}. Expanding the terms, we have the following set of inequalities:
    \begin{align*}
        &\norm{\grd \av{\Objfunc} \big(\ones \avgdmat(t) \big) - \grd \objfunc\big(\dvar^{\ast}\big)} \\
        & \le \frac{1}{\agents}\sum_{i=1}^{\agents} \norm{\grd \objfunc_i\big(\avgdmat(t)^{\top} \big) - \grd \objfunc_i(\dvar^{\ast})} \\
        & \stackrel{(\ddagger)}{\le} \frac{\lips}{\agents}\sum_{i=1}^{\agents} \norm{\avgdmat(t)^{\top} - \dvar^{\ast}} = \lips \norm{\avgdmat(t)^{\top} - \dvar^{\ast}},
    \end{align*}
    where \((\ddagger)\) follows from Assumption \ref{assum:Standard assumptions}, therefore, proving the assertion.

    Let us now focus on proving the assertion of Lemma \ref{it:port_4}. For each iteration \(t\), expanding the square in \(\norm{\cdmat(t) - \ones \avgcdmat(t)}^2 \) and simplifying further, we get
    \begin{align}\label{eq:random}
        &\norm{\cdmat(t) - \ones \avgcdmat(t)}^2 = \norm{\cdmat(t)}^2 + \agents \norm{\frac{1}{\agents} \ones^{\top} \cdmat(t)}^2 \nn \\ & \quad - 2 \sum_{i=1}^{\agents} \inprod{\big[\hess \objfunc_i \big(\dvar_i(t)\big)\big]{\inverse} \gtvar_i(t)}{\frac{1}{\agents} \ones^{\top} \cdmat(t)}\nn\\
        & = \norm{\cdmat(t)}^2 + \agents \norm{\frac{1}{\agents} \ones^{\top} \cdmat(t)}^2   - 2 \agents \underbrace{\inprod{\frac{1}{\agents} \ones^{\top} \cdmat(t)}{\frac{1}{\agents} \ones^{\top} \cdmat(t)}}_{\teL \norm{\frac{1}{\agents} \ones^{\top} \cdmat(t)}^2} \nn\\
        & = \norm{\cdmat(t)}^2 - \agents \norm{\frac{1}{\agents} \ones^{\top} \cdmat(t)}^2 \le \norm{\cdmat(t)}^2 \nn \\ 
        & = \sum_{i=1}^{\agents} \norm{\big[\hess \objfunc_i \big(\dvar_i(t) \big)\big]{\inverse} \gtvar_i(t)}^2 \le \frac{1}{\strconv^2} \norm{\gtmat(t)}^2 \; \text{(from \eqref{eq:strongly_convex_Lips})},
    \end{align}
    where
    \begin{align*}
        &\norm{\gtmat(t)} = \bigg \lVert \gtmat(t) - \ones \grd \ol{\Objfunc}(\dmat(t)) + \ones \big(\grd \ol{\Objfunc}(\dmat(t)) \\ &  \qquad \qquad- \grd \ol{\Objfunc}(\ones\avgdmat(t))\big)  + \ones \big(\grd \ol{\Objfunc}(\ones\avgdmat(t)) - \grd \objfunc(\dvar^{\ast}) \big) \bigg \rVert \\
        & \le \norm{\gtmat(t) - \ones \grd \ol{\Objfunc}(\dmat(t))} + \sqrt{\agents}\norm{\grd \ol{\Objfunc}(\dmat(t)) - \grd \ol{\Objfunc}(\ones\avgdmat(t))} \\ & \hspace{2.5cm} + \sqrt{\agents} \norm{\grd \ol{\Objfunc}(\ones\avgdmat(t)) - \grd \objfunc(\dvar^{\ast}) }\\
        & \le \norm{\gtmat(t) - \ones \grd \ol{\Objfunc}(\dmat(t))} + \sqrt{\agents}\frac{\lips}{\sqrt{\agents}} \norm{\dmat(t) - \ones \avgdmat(t)} \\ & \hspace{4cm}+ \sqrt{\agents} \lips \norm{\avgdmat(t)^{\top} - \dvar^{\ast}} \\ & \hspace{2cm}\text{ (invoking Lemma \ref{it:port_3} and \ref{it:port_5})}.
    \end{align*}
    Therefore, the resulting expression for the upper bound of \(\norm{\cdmat(t) - \ones \avgcdmat(t)}\) is given by
    \begin{align*}
        &\norm{\cdmat(t) - \ones \avgcdmat(t)} \le \frac{1}{\strconv}\norm{\gtmat(t)}\\
        & \hspace{1cm}\le \frac{1}{\strconv}\norm{\gtmat(t) - \ones \grd \ol{\Objfunc}(\dmat(t))} + \frac{\lips}{\strconv} \norm{\dmat(t) - \ones \avgdmat(t)} \\ & \hspace{4.5cm} + \sqrt{\agents} \frac{\lips}{\strconv} \norm{\avgdmat(t)^{\top} - \dvar^{\ast}}.  
    \end{align*}
    The proof is now complete.

    For the proof of Lemma \ref{it:port_6}, we employ an argument similar to the proof of Lemma \ref{it:port_4}, and write \(\norm{\hbmat(t) - \frac{1}{\agents} \ones \ones^{\top} \hbmat(t)}^2 = \norm{\hbmat(t)}^2 + \agents \norm{\avghbmat(t)}^2 - 2 \agents \norm{\avghbmat(t)}^2 \le \norm{\hbmat(t)}^2\).
    This completes the proof of Lemma \ref{it:port_6}.

\bibliographystyle{IEEEtran}
\bibliography{refs}

\end{document}